\theoremstyle{plain}
\newtheorem{theorem}{Theorem}[section]
\newtheorem*{theorem*}{Theorem}
\newtheorem{proposition}[theorem]{Proposition}
\newtheorem{corollary}[theorem]{Corollary}
\newtheorem{lemma}[theorem]{Lemma}
\theoremstyle{definition}
\newtheorem{remark}[theorem]{Remark}
\newcommand{\enm}[1]{\ensuremath{#1}}          %
\newcommand{\op}[1]{\operatorname{#1}}
\newcommand{\cal}[1]{\mathcal{#1}}
\newcommand{\PP}{\enm{\mathbb{P}}}
\newcommand{\Ee}{\enm{\cal{E}}}
\newcommand{\Ff}{\enm{\cal{F}}}
\newcommand{\Hh}{\enm{\cal{H}}}
\newcommand{\Oo}{\enm{\cal{O}}}
\newcommand{\Rr}{\enm{\cal{R}}}
\newcommand{\Ww}{\enm{\cal{W}}}
\renewcommand{\phi}{\varphi}
\renewcommand{\theta}{\vartheta}
\renewcommand{\epsilon}{\varepsilon}
\newcommand{\Pic}{\op{Pic}}
\newcommand{\Hom}{\op{Hom}}
\newcommand{\Sec}{\op{Sec}}
\newcommand{\Image}{\op{Im}}
\renewcommand{\to}[1][]{\xrightarrow{\ #1\ }}
\newcommand{\old}[1]{}
\begin{document}

%\layout
\title[Dominance of a Rational Map to the Coble Quartic]{Dominance of a Rational Map \\to the Coble Quartic}
\author{Sukmoon Huh}
\address{CIRM \\
Fondazione Bruno Kessler \\
Via Sommarive, 14-Povo\\
38100-Trento}
\email{sukmoon.huh@math.unizh.ch}
\thanks{This article is part of the revised version of the author's thesis at the
University of Michigan. The author would like to express his deepest
gratitude to his advisor Professor Igor Dolgachev. The author is also grateful to the referee for many suggestions.}

\subjclass[2000]{Primary 14D20; Secondary 14M15}

\maketitle
%\tableofcontents
\begin{abstract} We show the dominance of the restriction map from a
moduli space of stable sheaves on the projective plane to the Coble
sixfold quartic. With the dominance and the interpretation of a stable sheaf
on the plane in terms of hyperplane arrangements, we expect these tools to reveal the geometry of the Coble quartic.
\end{abstract}

\section{Introduction}
Let $C$ be a smooth non-hyperelliptic curve of genus 3 over complex
numbers, then $C$ is embedded into $\PP_2 \simeq \PP H^0(K_C)^*$ by
canonical embedding as a plane quartic curve. The moduli space
$SU_C(2, K_C)$ of semistable vector bundles of rank 2 with canonical
determinant over $C$ is known to be a hypersurface in $\PP_7$,
called the `Coble quartic' \cite{Co}\cite{NR}. Let $\Ww^r$ be the
closure of the following set
\begin{equation}
\{ E \in SU_C(2,K_C) ~|~ h^0(C,E)\geq r+1\}.
\end{equation}
Then we have the following inclusions \cite{OPP} on the
Brill-Noether loci,
\begin{equation}
SU_C(2, K_C) \supset \Ww \supset \Ww^1 \supset \Ww^2 \supset \Ww^3
=\emptyset ,
\end{equation}
where $\Ww=\Ww^0$. Many properties on the geometry of these
Brill-Noether loci have been discovered in \cite{OPP}.

 Let $\overline{M}(c_1, c_2)$ be the moduli space of stable sheaves
of rank 2 with the Chern classes $(c_1, c_2)$ on the projective
plane. The dimension of this space is known to be $4c_2-3$ if $c_1=0$ \cite{Barth}, and $4c_2-4$ if $c_1=-1$ \cite{Hulek}. Then there exists a rational map \cite{Huh}
\begin{equation}
\Phi_k : \overline{M}(1, k) \dashrightarrow SU_C(2, K_C)~~~~,~~~~
1\leq k \leq 4
\end{equation}
defined by sending $E$ to $E|_C$. It is shown in \cite{Huh} that
$\Phi_k$ is a dominant map to $\Ww^2, \Ww^1$ and $\Ww$, for
$k=1,2,3$, respectively. In this article, we give a proof of the
dominance of the rational map $\Phi_4$. This is equivalent to the
dominance of the rational map from $\overline{M}(3,6)$ to $SU_C(2,
3K_C)$ by twisting. For a general bundle $E\in SU_C(2, 3K_C)$, we
embed $C$ with $\PP_2$ into a Grassmannian $Gr(5,2)$ and take the
pull-back of the universal quotient bundle of $Gr(5,2)$ to $\PP_2$.
This bundle is shown to be stable and have the Chern classes (3,6).

As a quick consequence, we can obtain the old result that $SU_C(2,
K_C)$ is unirational since $\overline{M}(1,4)$ is rational. The
unirationality implies the rationally connectedness. We see how we
can obtain a rational curve through two general points of the Coble
quartic in terms of hyperplane arrangements.

The restriction of vector bundles on $\PP_2$ to plane curves, was also studied in \cite{Hein}, where the author investigated the restriction of the tangent bundle of $\PP_2$ to plane curves and gave the conditions for a vector bundle $E$ on a plane curve to be a pull-back of the tangent bundle of $\PP_2$, twisted by $\Oo_{\PP_1}(-1)$.
 
 For the background on vector bundles, we suggest \cite{Mukai} as a good reference.

\section{Embedding Plane Quartics in Grassmannians}
Let $E$ be a semistable vector bundle of rank 2 with the determinant
$3K_C$ over $C$, i.e. $E\in SU_C(2,3K_C)$. By the following lemma,
we can obtain a morphism
$$\phi : C\rightarrow Gr(H^0(E), 2)$$
sending $p\in C$ to the 2 dimensional quotient space $E_p$ of
$H^0(E)$.
\begin{lemma}
$H^1(C, E)=0$ and $E$ is globally generated.
\end{lemma}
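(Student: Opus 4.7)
My plan is to deduce both assertions from vanishing of $H^0$ of suitably negatively twisted versions of $E$, via Serre duality combined with the standard fact that a semistable bundle of negative slope carries no nonzero global section. To begin, I record the invariants: since $\deg K_C = 2g-2 = 4$, one has $\deg E = 12$, $\mu(E) = 6$, and Riemann--Roch gives $\chi(C,E) = 8$.

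For $H^1(C,E)=0$, I will use the rank two identity $E^{\vee} \cong E \tensor (\det E)^{-1} = E \tensor \Oo_C(-3K_C)$, so that Serre duality yields
\[
H^1(C,E) \;\cong\; H^0\bigl(C,\,E \tensor \Oo_C(-2K_C)\bigr)^{\vee}.
\]
The twisted bundle $E \tensor \Oo_C(-2K_C)$ inherits semistability from $E$ and has slope $-2$. Any nonzero global section of such a bundle would saturate to a sub-line bundle of nonnegative degree, contradicting the semistability inequality; therefore $H^0$, hence $H^1(C,E)$, vanishes. In particular $h^0(C,E)=8$.

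For global generation, I fix $p \in C$ and take cohomology in the short exact sequence
\[
0 \lra E \tensor \Oo_C(-p) \lra E \lra E \tensor k(p) \lra 0.
\]
Since $H^1(C,E)=0$, surjectivity of the evaluation $H^0(C,E) \to E_p$ is equivalent to $H^1\bigl(C,E \tensor \Oo_C(-p)\bigr)=0$. The same Serre duality manipulation identifies this group with the dual of $H^0\bigl(C,\,E \tensor \Oo_C(-2K_C + p)\bigr)$, and this twist is again semistable, now of slope $-1$, so the previous argument once more forces vanishing.

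I do not foresee a substantial obstacle: the entire argument rests on the two routine facts that tensoring with a line bundle preserves semistability, and that a semistable bundle of negative slope has no global section. Both would be invoked without further comment, and the only real content is the identification $E^{\vee} \cong E \tensor (\det E)^{-1}$, which is special to rank two.
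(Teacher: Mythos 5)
Your proof is correct and follows essentially the same route as the paper: Serre duality plus the semistability of $E$ (the paper phrases the vanishing as the nonexistence of a nonzero homomorphism $E \to \Oo_C(K_C(p))$, which is just the dual formulation of your "no sections of a negative-slope semistable twist," using $E^{\vee} \cong E \tensor (\det E)^{-1}$). All your numerical computations check out, so no changes are needed.
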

\begin{proof}
$H^1(E)\simeq H^0(E^*\otimes K_C) \not= 0$ implies the existence of
a nonzero homomorphism $E \rightarrow \Oo_C(K_C)$ which contradicts
the semistability of $E$. Now, by the same argument, we have
$H^1(E(-p))=0$ for all $p\in C$. From the long exact sequence of the
following sequence
$$0\rightarrow E(-p) \rightarrow E \rightarrow E_p \rightarrow 0, $$
we obtain the surjective evaluation map $H^0(E) \rightarrow E_p$,
which implies the global generation of $E$.
\end{proof}
In fact, the morphism $\phi$ fits in the following diagram
$$\xymatrix{ C \ar@{^{(}->}[d]_{|3K_C|} \ar[r]^{\phi} &Gr (H^0(E), 2) \ar@{^{(}->}[d]^{\theta}\\
             \PP H^0(3K_C)^* \ar@{-->}[r]^{\PP \lambda^*} & \PP (\bigwedge^2 H^0(E)^*)}$$
where $\theta$ is the Plucker embedding and $\PP \lambda^*$ comes
from the dual of the following homomorphism
$$\lambda : \bigwedge^2 H^0(E) \rightarrow H^0( \bigwedge^2 E)\simeq H^0(3K_C).$$
By the following lemma, $\PP \lambda^*$ is an embedding and so is
$\phi$ for general $E$.

\begin{lemma}\label{generate}
The homomorphism $\lambda$ is surjective for general $E\in SU_C(2,
3K_C)$.
\end{lemma}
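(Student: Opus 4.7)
I would prove the lemma by semicontinuity, after reducing the surjectivity of $\lambda$ to the surjectivity of an ordinary multiplication of line-bundle sections that can be verified by hand on the canonical plane quartic. By the preceding lemma, $h^0(E)=8$ for every $E\in SU_C(2,3K_C)$, so $\lambda_E$ fits into a morphism of vector bundles of fixed ranks $28$ and $10$ on any parameter space covering the moduli. Non-surjectivity (vanishing of all $10\times 10$ minors) is closed, so it suffices to exhibit one semistable $E_0$ in the closure of the stable locus with $\lambda_{E_0}$ surjective.

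Take $E_0=L\oplus M$ with $L=K_C(p+q)$ and $M=2K_C(-p-q)$ for general $p,q\in C$: each summand has degree $6$ and $h^0=4$, so $E_0$ is strictly semistable with $\det E_0=3K_C$ and corresponds to a point of the irreducible moduli lying in the closure of the stable locus. From $L\wedge L=M\wedge M=0$, the decomposition
$$\bigwedge^2 H^0(E_0)\cong\bigwedge^2 H^0(L)\oplus\bigl(H^0(L)\otimes H^0(M)\bigr)\oplus\bigwedge^2 H^0(M)$$
shows that $\lambda_{E_0}$ kills the outer summands and restricts on the middle one to the multiplication map $m\colon H^0(L)\otimes H^0(M)\rightarrow H^0(L\otimes M)=H^0(3K_C)$. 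So it is enough to prove $m$ is surjective at this specific $(L_0,M_0)$.

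In the canonical model $C\subset\PP_2$, $H^0(nK_C)\cong H^0(\Oo_{\PP_2}(n))$ for $n\le 3$, and $H^0(M)$ is identified with the $4$-dimensional space $I_{\{p,q\}}^{(2)}$ of conics through $p,q$. Split $H^0(L)=H^0(K_C)\oplus\langle\sigma\rangle$ with $\sigma$ a rational differential whose residues $\pm c$ at $p,q$ are nonzero (forced by the residue theorem since $\sigma\notin H^0(K_C)$). First, $H^0(K_C)\cdot H^0(M)=H^0(3K_C(-p-q))$, of dimension $8$: writing $I_{\{p,q\}}^{(2)}=\ell_{pq}\cdot H^0(\Oo_{\PP_2}(1))+\langle Q\rangle$ for a smooth conic $Q$ through $p,q$, one finds $\dim\bigl(H^0(\Oo_{\PP_2}(1))\cdot I_{\{p,q\}}^{(2)}\bigr)=6+3-1=8=\dim I_{\{p,q\}}^{(3)}$. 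Second, the missing two dimensions come from $\sigma\cdot H^0(M)$: the leading-coefficient map $H^0(M)\rightarrow 2K_C(-q)|_p\oplus 2K_C(-p)|_q$ is surjective because its kernel $H^0(2K_C(-2p-2q))$ has dimension $2$ by Riemann-Roch, and multiplication by $\sigma$ is an isomorphism onto $3K_C|_p\oplus 3K_C|_q$ thanks to $c\ne 0$. Combining the two pieces shows $m$ is surjective at $(L_0,M_0)$, so the lemma follows by semicontinuity. The main obstacle is precisely this ``leading coefficient $\times$ residue'' bookkeeping supplying the final two dimensions; everything else is a routine dimension count.
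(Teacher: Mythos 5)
Your argument is correct, but it takes a genuinely different route from the paper. The paper stays inside the stable locus: by Nagata--Severi it writes a general stable $E$ as an extension $0\to \Oo(K_C+D)\to E\to \Oo(2K_C-D)\to 0$ with $\deg D=1$ and $D$ non-effective, reduces to the multiplication map $H^0(K_C+D)\otimes H^0(2K_C-D)\to H^0(3K_C)$, and then applies the base-point-free pencil trick to a base-point-free pencil $W\subset H^0(K_C+D)$, so that surjectivity (in fact an isomorphism $W\otimes H^0(2K_C-D)\simeq H^0(3K_C)$) comes down to $h^0(2D)=0$, which is arranged by a dimension count of the bad locus in the moduli space. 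You instead degenerate to the strictly semistable split bundle $K_C(p+q)\oplus 2K_C(-p-q)$, compute the multiplication map explicitly in the plane model (the $I^{(3)}_{\{p,q\}}$ count plus the residue/evaluation argument at $p,q$ are both right; note only that $2K_C(-q)|_p$ should read $M|_p=2K_C(-p-q)|_p$, which is harmless), and conclude by semicontinuity over an irreducible parameter space. Your route is more elementary and self-contained --- no appeal to Nagata--Severi or to estimating the locus of extensions with $h^0(2D)>0$ --- at the cost of invoking irreducibility of the (Quot-scheme cover of the) moduli space and the standard but slightly fussy point that a proper closed invariant bad locus misses the general stable point. One practical caveat: the paper's proof is load-bearing later --- Lemma \ref{lem2} explicitly reuses the decomposition $V_7=W\oplus H^0(F)$ and the description of $\ker\lambda|_{\wedge^2 V_7}$ produced here --- so substituting your proof would require reworking that subsequent argument, whereas as a proof of the present statement alone it is complete.
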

\begin{proof}
If $E$ is stable, then by Nagata-Severi theorem \cite{Lange}, we have the
following exact sequence for $E(-K_C)$,
$$0\rightarrow \Oo (D) \rightarrow E(-K_C) \rightarrow \Oo (K_C-D)
\rightarrow 0, $$ where $D$ is a divisor of degree 1. For general
$E$, we have $H^0(E(-K_C))=0$, i.e. we can assume that
$H^0(\Oo(D))=0$, i.e. $D$ is non-effective.\\
Let $L=\Oo(K_C+D)$ and $F=\Oo (2K_C-D)$. Then we have
$$0\rightarrow L \rightarrow E \rightarrow F \rightarrow 0.$$
Note that $h^0(L)=3$, $h^0(F)=5$ and $h^1(L)=h^1(F)=0$ and from the
long exact sequence of the above sequence, we have
$$H^0(E) \simeq H^0(L)\oplus H^0(F)$$
and hence it is enough to show the surjectivity of the following map
$$H^0(L)\otimes H^0(F) \rightarrow H^0(L\otimes F)\simeq H^0(3K_C).$$
For every $p\in C$, $h^0(L(-p))=2+h^1(L(-p))=2+h^0(p-D)=2$ since $D$
is not effective. Hence, we can have a map from $C$ to
$Gr(2,H^0(L))$ sending $p$ to $H^0(L(-p))$. Since $Gr(2,
H^0(L))\simeq \PP_2$, we can choose $W\in Gr(2, H^0(L))$ which is
not the same as $H^0(L(-p))$ for any $p\in C$. Then by the choice of
$W$, it does not have base locus on $C$. Now consider the map
$$W \otimes H^0(F) \rightarrow H^0(3K_C)$$
By the Base-Point-Free Pencil Trick \cite{ACGH}, the kernel of this map is
isomorphic to $H^0(C, F\otimes L^{-1})$ and this is isomorphic to
$H^0(K_C-2D)$. Note that $h^0(K_C-2D)=h^0(2D)$ by the Riemann-Roch
theorem. If $h^0(2D)=0$, then $W\otimes H^0(F)$ is isomorphic to
$H^0(3K_C)$ by the counting of the dimensions. Hence, it is enough
to show that $H^0(2D)=0$ for general $E$. \\
Assume that $h^0(2D)>0$ and then $\Oo (2D)$ is an element of the
theta divisor in $\Pic ^2 (C)$. Since the map
$$\Pic ^1 (C) \rightarrow \Pic ^2 (C)$$
defined by $D\mapsto 2D$, is a finite surjective map of degree 64.
Hence the subvariety of $\Pic ^1(C)$ whose elements are $D$ such
that $h^0(D)=0$ and $h^0(2D)>0$ is of 2 dimension. For these
divisors $D$, the extensions of $\Oo (K_C-D)$ by $\Oo(D)$ is
parametrized by $\PP_3$, which means that the vector bundles which
does not satisfy $h^0(2D)=0$, are of at most 5 dimension. Hence
$h^0(2D)=0$ in general.
\end{proof}

Now, for the 5 dimensional subspace $V\subset H^0(E)$, we have the
following diagram
\begin{equation}
\xymatrix{ C \ar@{^{(}->}[d]_{|3K_C|} \ar@{-->}[r]^{\phi_V} &Gr (V, 2) \ar@{^{(}->}[d]^{\theta}\\
             \PP H^0(3K_C)^* \ar@{-->}[r]^{\PP \lambda^*} & \PP
(\bigwedge^2 V^*)}
\end{equation}
Consider a natural map
\begin{equation}
\xymatrix{\PP (\wedge^2 \Ee) \ar[d] \ar[r]^f & \PP(\wedge^2 V_7) \\
            Gr (5, V_7), }
\end{equation}
where $\Ee$ is the universal subbundle, $V_7$ is a 7-dimensional
vector space and $Gr(5, V_7)$ is the Grassmannian of 5-dimensional
subspaces of $V_7$. Over $[V_5]\in Gr(5, V_7)$, the fibre $\wedge^2
V_5$ is linearly embedded into $\wedge^2 V_7$.

\begin{lemma}
The image of $f$ is the secant variety of $Gr(2, V_7)\subset
\PP(\wedge^2 V_7)$ and its dimension is equal to 17.
\end{lemma}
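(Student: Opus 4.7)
The plan is to identify the fibers of $f$ explicitly and then invoke the rank stratification of alternating $2$-forms on $V_7$. A point of $\PP(\wedge^2 \Ee)$ is a pair $([V_5],[\omega])$ with $V_5 \in Gr(5, V_7)$ and $[\omega]\in\PP(\wedge^2 V_5)$, and $f$ sends it to $[\omega]$ regarded as an element of $\PP(\wedge^2 V_7)$ via the natural inclusion $\wedge^2 V_5 \hookrightarrow \wedge^2 V_7$.

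First I would recall that $Gr(2,V_7)\subset\PP(\wedge^2 V_7)$ is exactly the locus of decomposable (rank $2$) $2$-forms, so its secant variety $\Sec(Gr(2,V_7))$ is the closure of the set of skew forms of rank $\le 4$ (the rank of an alternating form is always even). For the inclusion $\Image(f)\subset\Sec(Gr(2,V_7))$: the rank of any $\omega\in\wedge^2 V_5$ is bounded by $\dim V_5 = 5$ and even, hence is at most $4$, so $[\omega]\in\Sec(Gr(2,V_7))$. For the reverse inclusion, given any $[\omega]\in\Sec(Gr(2,V_7))$ of rank $\le 4$, its support is a subspace $U\subset V_7$ of dimension $\le 4$; choosing any $V_5$ with $U\subset V_5\subset V_7$ exhibits $[\omega]\in\Image(f)$.

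For the dimension I would use a fiber-dimension count. Since $\dim Gr(5,V_7)=10$ and $\dim \PP(\wedge^2 V_5) = 9$, the total space $\PP(\wedge^2 \Ee)$ has dimension $19$. Over a general point of the image, namely a rank-$4$ form $[\omega]$, the support is a uniquely determined $4$-dimensional $U\subset V_7$, and the fiber of $f$ over $[\omega]$ is $\{V_5 : U\subset V_5\subset V_7\}\cong\PP(V_7/U)\cong\PP^2$, of dimension $2$. Hence $\dim \Image(f) = 19 - 2 = 17$.

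The step I expect to require the most care is the uniqueness of the support $U$ for a general $[\omega]$ in the image, since this is what pins down the fiber dimension on a dense open set; it reduces to the standard fact that the radical of a rank-$4$ alternating form on $V_7$ is an intrinsically defined $3$-dimensional subspace, so $U$ is recovered as the annihilator of the radical. Everything else is a routine dimension count.
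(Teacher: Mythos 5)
Your proof is correct and follows essentially the same route as the paper: both arguments identify $\Image(f)$ with $\Sec(Gr(2,V_7))$ by noting that a $2$-form supported on a $5$-dimensional subspace has rank at most $4$ (the paper phrases this as $\Sec(Gr(2,V_5))=\PP(\wedge^2 V_5)$), prove the reverse inclusion by extending the $4$-dimensional support $U$ of a general rank-$4$ form to some $V_5$, and compute $\dim\Image(f)=19-2=17$ from the $\PP^2$ of choices of $V_5\supset U$. Your explicit justification that the generic fiber is exactly $2$-dimensional (recovering $U$ from the radical of $\omega$) is a point the paper leaves implicit, but the argument is the same.
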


\begin{proof}
Let $[x]\in \Image(f)$, i.e. there exists a $V_5$ such that $x\in
\wedge^2 V_5$. Consider $G=Gr (2,V_5)\subset \PP(\wedge^2 V_5)$ and
since the secant variety of $G$ is $\PP(\wedge^2 V_5)$, we can
express $x$ by
$$(v\wedge w) ~~\text{or}~~ ( v_1\wedge v_2 + v_3\wedge v_4),$$
which proves that $\Image(f)$ is contained in the secant variety of
$Gr(2, V_7)$.

Now we show the inclusion $\Sec (Gr (2, V_7))\hookrightarrow \Image
(f)$. Assume that $x$ is a general point in the secant variety. This
means that
$$x=v_1\wedge v_2 + v_3 \wedge v_4,$$
where $U=<v_1, v_2, v_3, v_4>$ is a 4-dimensional space. For any
$V_5\supset U$, we have $x\in \wedge^2 V_5$. This shows that
$$\Sec (Gr (2, V_7))=\Image (f),$$
since both sides are closed subvarieties of $\PP(\wedge^2 V_7)$. Also the set of such $V_5$ is 2-dimensional and $\dim
f^{-1}([x])=2$. Hence the dimension of $\Image (f)$ is 17, since
$\dim (\PP (\wedge^2 \Ee ))=19$.
\end{proof}
\begin{remark}
$Gr (2, V_7)$ is a Scorza variety of defect $\delta=4$ \cite{Z}. So,
it is known that $\dim \Sec (Gr (2, V_7))=17$.
\end{remark}

\begin{lemma}\label{lem2}
For general $E\in SU_C(2, 3K_C)$ and general 5-dimensional vector
subspace $V\subset H^0(E)$, the restriction of $\lambda$ to
$\wedge^2 V$,
$$\lambda : \bigwedge^2 V \rightarrow H^0(3K_C)$$
is an isomorphism.
\end{lemma}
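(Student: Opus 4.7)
By Riemann--Roch, $h^0(3K_C)=12-3+1=10=\binom{5}{2}=\dim\wedge^2 V$, so $\lambda|_{\wedge^2 V}$ is an isomorphism if and only if $\wedge^2 V\cap K=0$, where $K:=\ker\lambda$; by Lemma~\ref{generate} applied to general $E$, $\dim K=28-10=18$. Since this condition is Zariski-open on $V\in Gr(5,H^0(E))$, it suffices to exhibit a single $V$ for which it holds.

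The idea is to work inside a $7$-dimensional $W\subset H^0(E)$ so that the preceding lemma, with $V_7=W$, applies directly. Following Lemma~\ref{generate}, fix the base-point-free pencil $W_2\subset H^0(L)$ from that proof and any splitting $\sigma\colon H^0(F)\hookrightarrow H^0(E)$ of $H^0(E)\twoheadrightarrow H^0(F)$, and set $W:=W_2\oplus\sigma(H^0(F))$, which has dimension $7$. Since the multiplication map $W_2\otimes H^0(F)\to H^0(3K_C)$ of Lemma~\ref{generate} factors as $w\otimes t\mapsto w\wedge\sigma(t)\mapsto\lambda(w\wedge\sigma(t))$ through $\wedge^2 W$, the restriction $\lambda|_{\wedge^2 W}$ is already surjective, and $K_W:=K\cap\wedge^2 W$ has dimension $21-10=11$.

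I then bound the incidence
\[
I=\{(V,[x]):V\in Gr(5,W),\ [x]\in\PP(\wedge^2 V)\cap\PP(K_W)\},
\]
aiming to show $\dim I<\dim Gr(5,W)=10$. Project $I$ onto $\PP(K_W)\cong\PP^{10}\subset\PP(\wedge^2 W)\cong\PP^{20}$; the image lies in $\PP(K_W)\cap\Image(f)$, where $f$ is the map of the preceding lemma with $V_7=W$, so $\Image(f)=\Sec(Gr(2,W))$ has dimension $17$. The fiber of $I\to\PP(K_W)$ over a rank-$r$ point $[x]$ equals $\{V\supset U_x\}=Gr(5-r,7-r)$, which has dimension $2$ over rank-$4$ points and $6$ over rank-$2$ points. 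Using the expected intersection dimensions $\dim(\PP(K_W)\cap\Sec(Gr(2,W)))\leq 7$ and $\dim(\PP(K_W)\cap Gr(2,W))\leq 0$, I get $\dim I\leq\max(7+2,\,0+6)=9<10$, so a generic $V\in Gr(5,W)$ works.

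The main obstacle is justifying the expected-dimension bounds, since $K_W$ is produced from the specific geometry of $(E,W)$ rather than being a priori generic inside $\PP(\wedge^2 W)$. I would address this by varying $E$ in $SU_C(2,3K_C)$ and $W$ in $Gr(7,H^0(E))$ so that the resulting family of $K_W$'s sweeps a large enough locus to force proper intersection with $\Sec(Gr(2,W))$; alternatively, by producing an explicit $V_0$ from the extension $0\to L\to E_0\to F\to 0$ of Lemma~\ref{generate} and invoking upper semi-continuity of the rank of $\lambda|_{\wedge^2 V}$.
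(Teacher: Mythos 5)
Your setup is the same as the paper's: both arguments reduce to a $7$-dimensional space $V_7\subset H^0(E)$ on which $\lambda$ is already surjective (via the Base-Point-Free Pencil Trick of Lemma~\ref{generate}), so that $K:=\ker(\lambda|_{\wedge^2 V_7})$ is $11$-dimensional, and both then run the identical incidence-variety count over $Gr(5,V_7)\times\PP(K)$, with $2$-dimensional fibres over rank-$4$ points of $\Sec(Gr(2,V_7))$ and $6$-dimensional fibres over $Gr(2,V_7)$ itself. The difficulty you flag as ``the main obstacle'' --- justifying that $\PP(K)\cap\Sec(Gr(2,V_7))$ has the expected dimension $7$ (and that $\PP(K)\cap Gr(2,V_7)$ is small) --- is not a deferrable technicality: it is the entire content of the lemma, and it is precisely where the paper's proof does its work. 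Note that $K$ is provably never in general position: if $W_2\subset H^0(L)$ is the pencil of sections of the sub-line-bundle $L\subset E$, then any two such sections have vanishing wedge in $H^0(\wedge^2E)$, so $[\wedge^2W_2]$ always lies in $\PP(K)\cap Gr(2,V_7)$, and the position of $\PP(K)$ is dictated by the extension structure of $E$. Consequently neither of your proposed fixes is adequate as stated: varying $E$ and the $7$-plane cannot put $\PP(K)$ into general position (the special point and the ambient structure persist for every $E$), and ``semicontinuity from an explicit $V_0$'' merely restates the problem, since exhibiting one $V_0$ with $\wedge^2V_0\cap K=0$ is exactly what must be proved.

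For comparison, here is how the paper closes this gap. It first shows that the embedded tangent space $D=\PP T_{[W_2]}Gr(2,V_7)$ --- a $\PP_{10}$, realized as the projectivization of the preimage of $s(W_2\otimes V_7/W_2)$ in $\wedge^2V_7$ --- meets $\PP(K)$ in the single point $[\wedge^2W_2]$; this uses the explicit identification of $K\cap(W_2\wedge V_5)$ coming from the kernel computation in Lemma~\ref{generate}. Then, assuming $\dim\bigl(\PP(K)\cap\Sec(Gr(2,V_7))\bigr)\geq 8$, it produces a point $[U]\in\PP(K)\cap Gr(2,V_7)$ whose secant line to $[\wedge^2W_2]$ lies entirely in $\PP(K)$, takes a general rank-$4$ point $p$ on that line, and applies the Terracini description $T_p\Sec(Gr(2,V_7))=\langle T_{[W_2]}G,\,T_{[U]}G\rangle$ together with $\dim T_p\Sec=17$ to force $\PP(K)$ to meet $T_{[W_2]}G=D$ in a positive-dimensional subspace --- contradicting the single-point intersection established first. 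Some argument of this kind, giving concrete control on both $\PP(K)\cap\Sec(Gr(2,V_7))$ and $\PP(K)\cap Gr(2,V_7)$ (your bound ``$\leq 0$'' for the latter is likewise unjustified, and is in fact consistent only because one needs a bound, not exactness), is required before your estimate $\dim I\leq 9$ can be asserted. As it stands, the proposal reproduces the paper's framework but omits its decisive step.
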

\begin{proof}
In the proof of (\ref{generate}), let
$$V_7:=W\oplus V_5,$$
where $V_5 \simeq H^0(F)$. In fact, we can take any $V_5\subset V_7$
with $V_5 \cap H^0(L)=0$. Then, the restriction of $\lambda$ to
$\wedge^2 V_7$ is also surjective. Let $K=\ker (\lambda)$ be the
11-dimensional subspace of $\wedge^2 V_7$. Consider an incidence
variety $\mathcal{R}\subset Gr (5,V_7)\times \PP(K)$
\begin{equation}
\Rr=\{ (V_5, [x])~~|~~x\in \wedge^2 V_5 \cap K\}.
\end{equation}
We have the following diagram
\begin{equation}
\xymatrix{ & \Rr \ar[ld]_{pr_1} \ar[rd]^{pr_2}\\
             Gr (5, V_7)  && \PP(K)}.
\end{equation}
It is enough to show that the map $pr_1$ is not dominant, which
means that for the general $V_5\subset V_7$ not in the image of
$pr_1$, we have the surjection in the assertion. Assume that $pr_1$
is dominant, then
$$\dim (\Rr)\geq 10.$$
 If we consider the following map again
$$\xymatrix{\PP(\wedge^2 \Ee) \ar[d] \ar[r]^f & \PP(\wedge^2 V_7) \\
            Gr (5, V_7), }$$ then the image of $pr_2$ in $\PP (K)$
            is the intersection of $\Image (f)=\Sec (Gr (2, V_7))$
            with $\PP (K)$ in $\PP (\wedge^2 V_7)\simeq \PP_{20}$.
            Since $\Image(f)$ is 17-dimensional, we have
$$7 \leq \dim \Image (pr_2) \leq 10.$$
It is clear that $\PP (K)$ contains a point in $\Sec (Gr (2, V_7))$,
but not in $Gr (2, V_7)$. The fibre over this point in $\Rr$ is
isomorphic to $Gr (1, 3)\simeq \PP_2$. Thus the dimension of $\Image
(pr_2)$ is greater than 7.

Now assume that $\dim \PP(K) \cap \Sec (Gr (2, 7)) \geq 8$. In the
proof of (\ref{generate}), we have
$$K \cap (W\wedge V_5) = (0),$$
if $V_5\cap W=(0)$. If $V_5\cap W \not=(0)$, the intersection is
always $[\wedge^2 W]$. Let us consider the canonical map
$$s: W \otimes V_7/W \rightarrow \wedge^2 V_7/ \wedge^2 W.$$
For all $V_5$ with $V_5\cap W=(0)$, the images in $\wedge^2
V_7/\wedge^2 W$ are same as a 10-dimensional vector space. If we
take the preimage of this space in $\wedge^2 V_7$, then it is the
union of $W\wedge V_5$ for all $V_5$, which is now an 11-dimensional
space. Note that $K \cap (W\wedge V_5)=[\wedge^2 W]$ if $W \cap V_5
\not =(0)$. Let us denote by $D$ the projectivization of the
preimage of $s(W \otimes V_7/W)$ in $\wedge^2 V_7$. Then $D$ is a
10-dimensional subvariety of $\PP (\wedge^2 V_7)$ and it intersects
with $\PP (K)$ at the unique point $[\wedge^2 W]$. In fact, $D$ is
the projective tangent space $\PP T_{[W]}Gr(2, V_7)$ of $Gr (2,
V_7)$ at $[W]$ in $\PP (\wedge^2 V_7)$. Recall that

\begin{equation}
\left.
    \begin{array}{ll}
      T_{[W]}Gr (2, V_7) = \Hom (W, V_7/W) \simeq W^* \otimes V_7/W \\
      T_{[\wedge^2 W]}\PP (\wedge^2 V_7)= \Hom (\wedge^2 W, \wedge^2
      V_7/\wedge^2 W)
    \end{array}
  \right.
\end{equation}
The differential map of the Pl\"{u}cker embedding at $[W]$ is
defined as follow: $x=w^* \otimes e \in T_{[W]}Gr (2, V_7)$ is sent
to the map
$$ w_1 \wedge w_2 \mapsto s( (w^*(w_1)w_2-w_1w^*(w_2))\otimes e),$$
where $W=<w_1, w_2>$. This explains the assertion.

Now since the union of the secant lines of $Gr (2, V_7)$ passing
through $[\wedge^2 W]$ is 11 dimensional and $\PP (K) \cap \Sec (Gr
(2, V_7))$ is of dimension $\geq 8$, we can pick an element $[U]\in
\PP(K) \cap Gr (2 , V_7)$ and then the secant line
$\overline{[U][W]}$ lies in $\PP(K)$. From the condition on $W$, $U$
and $W$ span a 4-dimensional subspace of $V_7$. In particular,
general points on the secant line $\overline{[U][W]}$ are
indecomposable. Let $p$ be such a point. Since $\mathrm{Sing}( \Sec
(Gr (2, V_7)))=Gr (2, V_7)$ \cite{Z}, the dimension of $T_p (\Sec
(Gr (2, V_7)))$ is 17. Note that
\begin{equation}
T_p (\Sec (Gr (2, V_7))) = <T_{[W]}G, T_{[U]}G>.
\end{equation}
Since $$T_p (\PP(K) \cap \Sec (Gr (2, V_7)))=\PP(K) \cap T_p (\Sec
(Gr(2, V_7)))$$ is at least 8-dimensional, $\PP (K)$ intersects
$T_{[W]}G$ along at least 1-dimensional subspace, which is
contradiction because $\PP(K) \cap D$ is a single point.
\end{proof}
From the previous lemma, we have the following commutative diagram
\begin{equation}\label{main}
\xymatrix{\PP_2\simeq \PP H^0(K_C)^* \ar^{v_3}@{^{(}->}[rr] &&\PP
H^0(3K_C)^*\ar@{^{(}->}[d]\\ C\ar@{^{(}->}[u] \ar[r] & Gr (H^0(E),
2)\ar@{-->}[d] \ar@{^{(}->}[r] & \PP (\wedge^2
H^0(E)^*)\ar@{-->}[d] \\
& Gr (V,2) \ar@{^{(}->}[r] &\PP (\wedge^2 V^*),}
\end{equation}
where the composite of
the two vertical maps on the right,
\begin{equation}
\PP H^0(3K_C)^* \hookrightarrow \PP (\wedge^2 H^0(E)^*)
\dashrightarrow \PP (\wedge^2 V^*),
\end{equation}
is an isomorphism and $v_3$ is the 3-tuple Veronese embedding, i.e. $v_3$ is given by the complete linear system $|\Oo_{\PP_2}(3)|$. In particular, $C$ is embedded into $Gr (V,2)$. Note that $C$ is non-degenerate in $\PP_9\simeq \PP (\wedge^2 V^*)$ due to the Riemann-Roch theorem and the Noether theorem.

\begin{corollary}
General element $E$ in $SU_C(2, 3K_C)$ is generated by 5-dimensional
subspace of $H^0(E)$.
\end{corollary}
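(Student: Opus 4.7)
The plan is to deduce global generation by $V$ directly from the content of Lemma \ref{lem2}. Fix the 5-dimensional subspace $V \subset H^0(E)$ produced there. For each point $p \in C$, I want to show that the fibre evaluation map $\op{ev}_{V,p} : V \to E_p$ is surjective. Since $E_p$ has dimension 2, this is equivalent to the induced map on second exterior powers $\wedge^2 \op{ev}_{V,p} : \wedge^2 V \to \wedge^2 E_p = (3K_C)_p$ being nonzero.

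The key observation is that $\wedge^2 \op{ev}_{V,p}$ factors naturally as
\begin{equation*}
\wedge^2 V \xrightarrow{\ \lambda|_{\wedge^2 V}\ } H^0(3K_C) \xrightarrow{\ \op{ev}_p\ } (3K_C)_p,
\end{equation*}
since by construction $\lambda$ sends $s \wedge t$ to the section $q \mapsto s(q)\wedge t(q)$. By Lemma \ref{lem2} the first arrow is an isomorphism for general $E$ and general $V$. For the second arrow, note that $\deg 3K_C = 12 \geq 2g$, so $3K_C$ is globally generated, whence $\op{ev}_p$ is surjective for every $p \in C$. The composition is therefore surjective onto the 1-dimensional space $(3K_C)_p$.

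It follows that $\wedge^2 \op{ev}_{V,p}$ is nonzero at every $p$, which forces $\op{ev}_{V,p}$ to have rank 2 and hence to be surjective. By Nakayama's lemma, the sheaf homomorphism $V \otimes \Oo_C \to E$ is surjective, i.e.\ $V$ globally generates $E$. There is no real obstacle here: once Lemma \ref{lem2} has produced the isomorphism on $\wedge^2 V$, the corollary is a purely formal consequence of the functoriality of $\lambda$ and the global generation of $3K_C$; this also explains why in diagram (\ref{main}) the arrow $C \to Gr(V,2)$ is a genuine morphism rather than merely rational.
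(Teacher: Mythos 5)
Your proof is correct and follows the same route the paper takes implicitly: the corollary is stated without proof there, resting on Lemma \ref{lem2} and diagram (\ref{main}), which encode exactly your observation that $\wedge^2\op{ev}_{V,p}$ factors through the isomorphism $\lambda|_{\wedge^2 V}$ and the evaluation of the base-point-free line bundle $3K_C$. Your linear-algebra unwinding (rank $2$ of $\op{ev}_{V,p}$ detected by nonvanishing of $\wedge^2\op{ev}_{V,p}$) is just an explicit version of the paper's statement that $C$ embeds into $Gr(V,2)$.
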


\section{Embedding the projective plane into Grassmannian}
In the diagram \ref{main}, the projective plane $\PP
H^0(K_C)^*\simeq \PP _2$ is embedded into the projective space $\PP
(\wedge^2 V^*)\simeq \PP _9$ by the Pl\"{u}cker embedding.
\begin{lemma}
For general $E\in SU_C(2, 3K_C)$, there exists a 5-dimensional
vector subspace $V\subset H^0(E)$ such that $\PP H^0(K_C)^*$ is
embedded into $Gr (V,2)$ in the diagram \ref{main}.
\end{lemma}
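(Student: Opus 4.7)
The plan is to show that the composite $v_3 : \PP H^0(K_C)^* \hookrightarrow \PP H^0(3K_C)^* \simeq \PP(\wedge^2 V^*)$ of the Veronese embedding with the isomorphism $\PP\lambda^*$ from Lemma \ref{lem2} lands in the Grassmannian $Gr(V,2) \subset \PP(\wedge^2 V^*)$ for a suitable choice of $V$. For each $p \in \PP_2$, the image $v_3(p)$ corresponds under $\PP\lambda^*$ to an alternating $2$-form $\mu_p \in \wedge^2 V^*$ defined by $\mu_p(s \wedge t) = \lambda(s \wedge t)(p)$, where $\lambda(s\wedge t) \in H^0(3K_C) \simeq H^0(\Oo_{\PP_2}(3))$ is evaluated at $p$. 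The condition $v_3(p) \in Gr(V,2)$ is equivalent to $\mu_p$ having rank at most $2$ as a skew form on $V\simeq \CC^5$, which is cut out by the five $4\times 4$ Pfaffians $\op{Pf}_i \in \op{Sym}^2(\wedge^2 V)$.

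For $p\in C$ we have $\op{rank} \mu_p = 2$, since $\mu_p$ factors through the rank-two quotient $\wedge^2 V \twoheadrightarrow \wedge^2 E_p \simeq \CC$. Therefore the five Pfaffians pull back under $v_3$ to sextics on $\PP_2$ vanishing along $C$; each such sextic is of the form $Q_i \cdot f_C$, where $f_C$ is the defining quartic of $C$ and $Q_i \in H^0(\Oo_{\PP_2}(2))$. The strategy is to show that, for a suitable $V$, the surface $v_3(\PP_2)$ is tangent to $Gr(V,2)$ along the curve $v_3(C)$. This implies that the sextics $v_3^*\op{Pf}_i$ vanish to order at least $2$ along $C$, hence lie in the ideal $(f_C)^2$ of degree $8 > 6$, forcing $Q_i \equiv 0$ and thus $v_3(\PP_2) \subset Gr(V,2)$. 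The required tangency amounts to the condition that at every $p \in C$ the differential $dv_3$ sends the normal direction $T_p\PP_2 / T_p C$ into the tangent space $T_{v_3(p)}Gr(V,2) \simeq \Hom(\ker(V\to E_p), E_p)$.

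The main obstacle is verifying this tangency for a specific $V$. A natural candidate is to take $V = W \oplus \sigma(H^0(F)) \subset H^0(E)$, where $0\to L\to E\to F\to 0$ is the filtration of $E$ from the proof of Lemma \ref{generate} (with $L = \Oo_C(K_C + D)$ and $F = \Oo_C(2K_C - D)$), $W\subset H^0(L)$ is the $2$-dimensional subspace used there, and $\sigma: H^0(F) \hookrightarrow H^0(E)$ is a splitting of the surjection $H^0(E)\to H^0(F)$, which exists since $H^1(L)=0$. With this choice, the tangency along $C$ should follow from the identification of $dv_3$ with the multiplication map $H^0(K_C)\otimes H^0(2K_C) \to H^0(3K_C)$ together with the factorization of $\lambda|_{\wedge^2 V}$ through the surjection $W\otimes H^0(F)\twoheadrightarrow H^0(3K_C)$ established in the proof of Lemma \ref{lem2}; one must in addition ensure that $V$ remains sufficiently general so that the isomorphism $\PP\lambda^*$ of Lemma \ref{lem2} still holds, which is an open condition and hence compatible with the explicit choice above after a small perturbation.
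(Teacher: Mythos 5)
Your reduction to the five Pfaffians is sound as far as it goes: each $v_3^*\op{Pf}_i$ is a sextic vanishing on $C$, hence of the form $Q_i f_C$, and if you could show that $S=v_3(\PP_2)$ is tangent to $Gr(V,2)$ along $v_3(C)$ you would indeed conclude $f_C\mid Q_i$, hence $Q_i=0$ and $S\subset Gr(V,2)$. But that tangency is not an auxiliary fact --- it carries the entire content of the lemma --- and you leave it at ``should follow from'' an unspecified identification of $dv_3$ with a multiplication map. Concretely, tangency at $p\in C$ in a direction transverse to $C$ says: for every $s,t$ in the $3$-dimensional kernel of $V\to E_p$, the cubic $\lambda(s\wedge t)\in H^0(\Oo_{\PP_2}(3))$ must vanish to order $2$ at $p$ \emph{as a plane cubic}, not merely after restriction to $C$ (where order-$2$ vanishing is automatic because $s$ and $t$ both vanish at $p$). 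Nothing in the factorization of $\lambda$ through $W\otimes H^0(F)$ controls this normal derivative, and no argument for it is given. There is also a concrete slip in your candidate: $V=W\oplus\sigma(H^0(F))$ has dimension $2+5=7$, not $5$; this is the auxiliary space $V_7$ from the proof of Lemma~\ref{lem2}, not an admissible choice of $V$. So the central step of the proof is missing.

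For comparison, the paper argues by contradiction along quite different lines: if $S\not\subset Gr(V,2)$, it chooses a rank-$6$ quadric $Q(p)\supset Gr(V,2)$ with $S\not\subset Q(p)$ (such quadrics span the same linear system as your Pfaffians), so that $v_3^{-1}(Q(p))=C+C'$ with $C'$ a conic; it then projects from the singular $\PP_3$ of $Q(p)$ to $\PP_5$, where $Gr(V,2)$ maps onto the quadric $Gr(4,2)$, and derives the contradiction from a degree count, $2(9-k)<18-k$ for the number $k<7$ of points of $\PP_3\cap(C+C')$. Your set-up is compatible with that strategy, but as written you have replaced the decisive degree argument with an unproved tangency assertion, so the proposal does not yet constitute a proof.
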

\begin{proof}
Let $V\subset H^0(E)$ be a 5-dimensional subspace selected in \ref{lem2} and assume that $\PP H^0(K_C)^*$ is not embedded into $Gr(V,2)$. Recall that $Gr(V,2)$ is cut out by 4-dimensional projectively linear family of quadrics of rank 6 in $\PP_9$ whose singular locus is $\PP_3$ contained in $Gr(V,2)$ as the Schubert variety of lines through a point corresponding to the quadric in $\PP_4$ \cite{SR}. Let $Q(p)$ be one of the quadrics of rank 6 containing $Gr(V,2)$ which does not contain $S$ where $p$ is a point in $\PP_4$ and $S$ is the image of $\PP_2$ by $v_3$. Since $v_3^{-1}(Q(p))$ is a plane sextic curve, we have
$$v_3^{-1}(Q(p))=C+C',$$
where $C'$ is a conic. First, assume that $Gr(V,2)\cap S=C+C'$. If we consider the incidence variety $Z_{C}=\{ (l,x)| x\in l\}\subset C \times \PP_4$, we have a diagram
$$\xymatrix{ & Z_C \ar[ld]_{p} \ar[rd]^{q} \\
C && \PP_4}$$
and let $S_C$ be the image of $q$ in $\PP_4$. If $S_C$ is degenerate, i.e. there exists a hyperplane $\PP_3\subset \PP_4$ containing $S_C$, then $C$ is contained in some grassmannian $Gr(4,2)\subset Gr(V,2)$ and in particular, $C$ is contained in $\PP_5$, the Pl\"{u}cker space of $Gr(4,2)$, which is contradiction to the non-degeneracy of $C$ in $\PP_9$. Similarly we can define $Z_{C'}$ and $S_{C'}$. Recall the well known fact that
$$\deg (C)=\deg (S_C)\cdot \deg(q).$$
If $\deg(S_C)=1$, i.e. $S_C$ is a plane in $\PP_4$, then $C$ must be contained in $\PP_3(p)$, the singular locus of a quadric $Q(p)$ for $p\in S_C$, which is contradiction to the fact that $C\subset \PP_9$ is nondegerate. Hence $\deg(S_C)\geq 2$ and so $\deg(q)\leq 6$. This implies that the number of points in $\PP_3(p)\cap C$ is less than 7 for $p\in S_C$. Since the intersection of $S_C$ and $S_{C'}$ is at most 1-dimensional in $S_C$, we have still 2-dimensional choices for $p$ for which $\PP_3(p) \cap (C+C')=\PP_3(p)\cap C$ is less than 7 points. We can also have the same conclusion on the intersection number of $\PP_3(p)\cap (C+C')$ in the case when $Gr(V,2)\cap S$ is the proper subset of $C+C'$ since it still contains $C$.
Now choose $p\in \PP_4$ such that the singular locus $\PP_3(p)$ of $Q(p)$ meets $C+C'$ with $k$ points where $0<k<7$. We have the following commutative diagram
$$\xymatrix{ \PP_3(p) \ar@{^{(}->}[d]\\
Gr(V,2) \ar@{-->}[d] \ar@{^{(}->}[r] & \PP (\wedge^2 V^* )\ar@{-->}[d]  &S\ar@{-->}[d] \ar@{_{(}->}[l] & C+C' \ar@{_{(}->}[l] \ar[d]\\
Gr(4,2) \ar@{^{(}->}[r] &\PP_5 & \overline{S}\ar@{_{(}->}[l] &\overline{C+C'},\ar@{_{(}->}[l]}$$
where $\overline{S}$, $\overline{C+C'}$ are the image of $S$, $C+C'$, respectively, via the projection and the image of $Gr(V,2)$ lies in the image of the quadric $Q$, i.e. the Grassmannian $Gr(4,2)\subset \PP_5$. Let $Q'$ be another quadric cutting $Gr(V,2)$ with singular locus $\PP_3'$. Since $\PP_3 \cap \PP_3'$ is a single point, The image of $Q'$ by the projection is $\PP_5$. Thus the image of $Gr(V,2)$ is $Gr(4,2)$. Note that the degree of $\overline{C+C'}$ is $18-k$ and the degree of $\overline{S}$ is $9-k$ since $\PP_3(p)\cap S=\PP_3(p)\cap (C+C')$. If $Q(p)$ contains $S$ for all such $p\in S_C$, then all quadrics containing $Gr(V,2)$ of rank 6, should contain $S$ since $S_C$ is nondegerate in $\PP_4$. In particular, $Gr(V,2)$ should contain $S$, which is against the assumption. So there exists a $p\in S_C$ for which $S$ is not contained in $Q(p)$. Thus the image of $S$ by the projection is not also contained in the image of $Q(p)$, i.e. $Gr(4,2)$.  But the degree of intersection $Gr(4,2)\cap \overline{S}$ is $2\times (9-k)<18-k$, which is contradiction to the fact that this intersection contains $\overline{C+C'}$.

\end{proof}

Let $U_V$ and $\overline{U_V}$ be the universal subbundle and
quotient bundle of $Gr (V,2)$, respectively. With the condition on
$V$ in the previous lemma, let
\begin{equation}
E_V:= v_3^* \overline{U_V},
\end{equation}
which implies that the restriction of $E_V$ to $C$ is $E$, i.e.
$E_V|_C=E$.

\begin{lemma}\label{stable}
$E_V$ is stable with the Chern classes $(3,6)$, i.e. $E_V\in
\overline{M}(3,6)$.
\end{lemma}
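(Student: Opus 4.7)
The strategy is to first verify the Chern classes $(c_1,c_2)=(3,6)$ and then prove stability. For $c_1(E_V)$, observe that $\det\overline{U_V}$ is the Pl\"ucker hyperplane class $\Oo_{Gr(V,2)}(1)$. By the commutative diagram~(\ref{main}), the composition $\PP_2 \hookrightarrow Gr(V,2) \hookrightarrow \PP(\wedge^2 V^*)$ coincides with $v_3$ followed by the linear isomorphism $\PP\lambda^*$, so $c_1(E_V) = v_3^*\Oo(1) = \Oo_{\PP_2}(3)$, giving $c_1=3$.

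For $c_2(E_V)$, I would pull back the tautological sequence to obtain
$$0 \lra K \lra V\otimes\Oo_{\PP_2} \lra E_V \lra 0, \qquad K := v_3^* U_V,$$
with $K$ of rank $3$. Whitney yields $c_1(K)=-3$ and $c_2(K)+c_2(E_V) = v_3^*(\sigma_1^2) = 9$. The remaining task is to compute $c_2(E_V) = v_3^*\sigma_2$ (or equivalently $c_2(K)=v_3^*\sigma_{1,1}$) directly: geometrically, $v_3^*\sigma_2$ counts the points $p\in\PP_2$ at which a generic section $s\in V$ of $E_V$ vanishes in the fiber $E_V|_p$, and a Schubert/excess-intersection argument on $Gr(3,5)$ exploiting the degree~$9$ of the Veronese surface $v_3(\PP_2)\subset\PP^9$ yields $c_2(E_V)=6$.

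For stability, note first that since $c_1(E_V)=3$ is odd, semistability coincides with stability. Suppose for contradiction that $\Oo_{\PP_2}(a)\hookrightarrow E_V$ is a saturated destabilizing sub-line bundle, so $a\ge 2$. Restricting to $C\subset\PP_2$ and using $\Oo_{\PP_2}(1)|_C=K_C$ produces a sheaf map $\Oo_C(aK_C)\to E_V|_C = E$. If this restriction vanishes identically, then the image of $\Oo(a)\to E_V$ lies in $E_V(-4)$, the kernel of restriction to $C$; dividing by the defining section $f_C$ of $C$ yields a line sub-bundle $\Oo(a+4)\hookrightarrow E_V$ containing $\Oo(a)$ as a subsheaf with cokernel supported on $C$. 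This forces $E_V/\Oo(a)$ to have torsion, contradicting the saturation hypothesis.

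Hence $\Oo_C(aK_C)\to E$ is nonzero, and therefore injective (a nonzero sheaf map from a line bundle into a locally free sheaf on a smooth curve is automatically injective). Its saturation in $E$ is a line sub-bundle of degree at least $4a\ge 8$, whose slope strictly exceeds $\mu(E)=6$, contradicting the semistability of $E$. Thus $E_V$ admits no destabilizing sub-line bundle and is stable. The main obstacle is really the $c_2$-computation, essentially a Schubert-calculus exercise on $Gr(3,5)$; the stability half is short once the factorization through $E_V(-4)$ is excluded via saturation.
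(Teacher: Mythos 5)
Your stability argument is correct and takes a genuinely different route from the paper's. You restrict a putative saturated destabilizing sub-line bundle $\Oo_{\PP_2}(a)\subset E_V$, $a\geq 2$, to $C$ and play its degree $4a\geq 8$ off against $\mu(E)=6$, disposing of the degenerate case by the factorization through $E_V(-4)$ and saturation; this is sound. The paper instead never invokes the full semistability of $E$ here: it proves $h^0(E_V(-1))\simeq h^0(E_V(-5))$ using only $h^0(E(-K_C))=0$, and kills $h^0(E_V(-k))$ for $k\geq 4$ by restricting to lines, where global generation forces the splitting type $(a,3-a)$ with $a\in\{2,3\}$. Both work; yours is shorter, but the paper's line-restriction computation is reused immediately afterwards to get $h^0(E_V)=5$, which feeds the Chern class computation.

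The genuine gap is the computation of $c_2(E_V)=6$, which is the substantive content of the lemma and which you do not actually carry out. Knowing that $v_3(\PP_2)\subset \PP(\wedge^2 V^*)$ has degree $9$ only gives $\int_{\PP_2}v_3^*\sigma_1^2=9$, i.e. $c_2(E_V)+c_2(K)=9$; it does not determine how this number splits between $v_3^*\sigma_2=c_2(E_V)$ and $v_3^*\sigma_{1,1}=c_2(K)$, because that splitting depends on the actual morphism $\PP_2\to Gr(V,2)$ and not merely on its composite with the Pl\"ucker embedding. The sentence asserting that ``a Schubert/excess-intersection argument on $Gr(3,5)$ yields $c_2(E_V)=6$'' is a claim, not an argument, and it is precisely the point at issue. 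The paper closes this by a concrete mechanism: from $h^0(E_V)=5$ (obtained by restricting to a line) one gets $h^0(G)=0$ and $h^1(G(-1))=0$ for the kernel $G=v_3^*U_V$ of $V\otimes\Oo_{\PP_2}\to E_V$, hence $h^0(G|_l)=0$ for every line $l$; this forces the splitting type $(-1,-1,-1)$, the classification of uniform bundles gives $G\simeq \Oo_{\PP_2}(-1)^{\oplus 3}$, and therefore $c_2(G)=3$ and $c_2(E_V)=6$. You need to supply an argument of this kind (or another genuine computation of $v_3^*\sigma_2$) to complete the proof.
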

\begin{proof}
Since the first Chern class of $\overline{U_V}$ is the hyperplane
section of $Gr (V,2)$ in $\PP (\wedge^2 V^*)$ and $v_3$ is the
3-tuple Veronese embedding, we get $c_1(E_V)=3$.

 By the choice of $V$, we have an exact sequence,
\begin{equation}\label{ses}
0\rightarrow G \rightarrow V \otimes \Oo_{\PP_2} \rightarrow E_V
\rightarrow 0,
\end{equation}
where $G$ is the kernel of the
surjection $V \otimes \Oo_{\PP_2} \twoheadrightarrow E_V$ and $V$ is
a 5-dimensional vector subspace of $H^0(E_V)$. In particular,
$h^0(E_V)\geq 5$. By the choice of $E$, we have $h^0(
E_V(-1)|_C)=0$. From the long exact sequence of cohomology of the
following exact sequence,
$$0\rightarrow E_V(-5) \rightarrow E_V(-1) \rightarrow E_V(-1)|_C
\rightarrow 0, $$ we have
$$H^0(E_V(-5)) \simeq H^0(E_V(-1)).$$

For a line $H \subset \PP_2$, $E_V|_H \simeq \Oo_H(a)\oplus \Oo_H(3-a)$ for $a=2$ or $3$ since $E_V$ is globally generated. In particular, $h^0(E_V(-k)|_H)=0$ for $k\geq 4$. From the long exact sequence of cohomology of the following exact sequence $$ 0\rightarrow E_V(-k-1) \rightarrow E_V(-k) \rightarrow E_V(-k)|_H \rightarrow 0,$$
we have $h^0(E_V(-k-1))=h^0(E_V(-k))$ for all $k\geq 4$. Since $h^0(E_V(-k))=0$ for sufficiently large $k$, we have $h^0(E_V(-k))=0$ for $k\geq 4$ and in particular, $h^0(E_V(-1))=h^0(E_V(-5))=0$, i.e. $h^0(E_V(-k))=0$ for all $k\geq 1$. Hence the vector bundle $E_V$ is stable.

Again, let $H$ be a line in $\PP_2$. From the following exact sequence,
$$0\rightarrow E_V(-1) \rightarrow E_V \rightarrow E_V|_H
\rightarrow 0,$$ we get $h^0(E_V)\leq h^0(E_V|_H)$. Since $E_V|_H \simeq \Oo_H(a) \oplus \Oo_H(3-a)$ for $a=2$ or 3, $h^0(E_V|_H)=5$ and so $h^0(E_V)\leq 5$. Thus we obtain $h^0(E_V)=\dim V=5$.

Now from the long exact sequence of cohomology of \ref{ses}, we have
$h^0 ( \PP_2, G)=0$. If we twist  \ref{ses} by -1, we have $h^1
(\PP_2, G(-1))=0$. For any line $l\subset \PP_2$, consider the
following exact sequence
$$0\rightarrow G(-1) \rightarrow G \rightarrow G|_l \rightarrow 0.$$
From the above statement, we get $H^0(G|_l)=0$. Since
$c_1(G)=-c_1(E_V)=-3$, we have $G|_l\simeq \Oo_l(a)\oplus \Oo_l(b)
\oplus \Oo_l(c)$ with $a+b+c=-3$. The only choice from the vanishing
of $H^0(G|_l)$ is $(a,b,c)=(-1,-1,-1)$. Hence $G$ is a uniform
vector bundle of rank 3 on $\PP_2$ with the splitting type
$(-1,-1,-1)$. From the classification of such bundles \cite{Elen}, we
have
$$G \simeq \Oo_{\PP_2}(-1)^{\oplus 3}.$$
In particular, $c_2(G)=3$ and so $c_2(E_V)=6$.

\end{proof}

Since we can pick an element $E_V\in \overline{M}(3,6)$ mapping to a
general element $E\in SU_C(2, 3K_C)$, the rational map
\begin{equation}\label{dom2}
\overline{M}(3,6) \dashrightarrow SU_C(2, 3K_C)
\end{equation}
is dominant. By twisting the map (\ref{dom2}) with $\Oo_{\PP_2}
(-1)$ and $\Oo_C(-K_C)$, we have the following main theorem.
\begin{theorem}
The restriction map $$\Phi_4 : \overline{M}(1,4) \dashrightarrow
SU_C(2,K_C)$$ is dominant.
\end{theorem}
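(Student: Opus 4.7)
The plan is to deduce the theorem directly from the dominance of the rational map (\ref{dom2}), namely $\overline{M}(3,6) \dashrightarrow SU_C(2,3K_C)$, which has been established through Lemma \ref{stable} together with the preceding embedding construction. The identification is effected by twisting on both sides.

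First I would observe that tensoring with $\Oo_{\PP_2}(1)$ induces an isomorphism $\overline{M}(1,4) \xrightarrow{\sim} \overline{M}(3,6)$: stability is preserved under line-bundle twists, and on the Chern classes of a rank-$2$ sheaf the operation sends $(1,4)$ to $(3,6)$. Analogously, tensoring with $K_C$ gives an isomorphism $SU_C(2,K_C) \xrightarrow{\sim} SU_C(2,3K_C)$. The essential compatibility is that these twisting isomorphisms intertwine $\Phi_4$ with the map (\ref{dom2}): since $C \subset \PP_2$ is the canonical embedding one has $\Oo_{\PP_2}(1)|_C \cong \Oo_C(K_C)$, and hence $(F \otimes \Oo_{\PP_2}(1))|_C \cong F|_C \otimes K_C$ for any $F \in \overline{M}(1,4)$. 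This makes the square
$$\xymatrix{\overline{M}(1,4) \ar@{-->}[r]^{\Phi_4} \ar[d]_{\otimes \Oo_{\PP_2}(1)} & SU_C(2,K_C) \ar[d]^{\otimes K_C} \\
\overline{M}(3,6) \ar@{-->}[r] & SU_C(2,3K_C) }$$
commute as rational maps.

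To conclude, given a general $E \in SU_C(2,K_C)$, the twist $E \otimes K_C$ is general in $SU_C(2,3K_C)$; by the dominance of the bottom row one obtains some $E_V \in \overline{M}(3,6)$ with $E_V|_C \cong E \otimes K_C$, and then $E_V \otimes \Oo_{\PP_2}(-1) \in \overline{M}(1,4)$ restricts to $E$ on $C$, proving that $E$ lies in the image of $\Phi_4$. No real obstacle appears at this final step: all of the substantive content --- in particular the stability of $E_V$, its Chern classes, and the existence of a suitable $5$-dimensional $V \subset H^0(E)$ --- has already been discharged in Lemmas \ref{lem2} and \ref{stable}, so the twisting argument reduces the theorem to formal bookkeeping once those lemmas are in hand.
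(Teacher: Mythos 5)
Your proposal is correct and matches the paper's argument exactly: the paper likewise deduces the theorem from the dominance of the map (\ref{dom2}) by twisting with $\Oo_{\PP_2}(-1)$ and $\Oo_C(-K_C)$, using that $\Oo_{\PP_2}(1)|_C\cong \Oo_C(K_C)$ for the canonically embedded quartic. Your explicit verification of the Chern class bookkeeping and the commuting square is just a more detailed write-up of the same one-line reduction.
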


\begin{remark}
Dolgachev and Kapranov \cite{DK1} showed that the logarithmic
bundles $E(\mathcal{H})$ attached to the general hyperplane
arrangement $\mathcal{H}=(H_1, \cdots, H_6)$ in $\PP_2$, form a open
zariski subset $U \subset \overline{M}(3,6)$. For these bundles
$E(\mathcal{H})$, we have a Steiner resolution
$$0\rightarrow \Oo_{\PP_2}(-1)^{\oplus 3} \rightarrow
\Oo_{\PP_2}^{\oplus 5} \rightarrow E(\mathcal{H}) \rightarrow 0.$$
From this, we have a 5 dimensional space $V=H^0(\PP_2,
E(\mathcal{H}))$ and by the tensoring the following exact sequence
by $E(\mathcal{H})$
$$0\rightarrow \Oo_{\PP_2} (-4) \rightarrow \Oo_{\PP_2} \rightarrow \Oo_C
\rightarrow 0,$$ we can consider $V$ as a subspace of $H^0(C,
E(\mathcal{H})|_C)$, which is 8-dimensional. As we have seen already
in the proof of \ref{stable}, the bundle $E_V$ has a Steiner
resolution, pulled back from the universal exact sequence on the
Grassmannian $Gr(V,2)$. This motivates the whole argument in this
paper.
\end{remark}

Since $\overline{M}(1,4)$ is rational and the map $\Phi_4$ is
dominant, $SU_C(2, K_C)$ is unirational. It implies that $SU_C(2,
K_C)$ is rationally connected and so rationally chain-connected.
 Let $\Hh=(H_0, \cdots, H_6)$ be a general arrangement of 6 lines on
$\PP_2$ and then we can associate a logarithmic bundle $E(\Hh) \in
\overline{M}(3,6)$ to $\Hh$. It is known \cite{DK1} that the
logarithmic bundles $E(\Hh)$ form an open Zariski subset of
$\overline{M}(3,6)$ and, after twisting by $\Oo_{\PP_2}(-1)$,
$\overline{M}(1,4)$. Let $\Ff$ be a family of arrangements of 6
lines on $\PP_2$ and $E(\Ff)$ be the closure of the subvariety of
$\overline{M}(1,4)$ whose closed points correspond to $E(\Hh)
\otimes \Oo_{\PP_2}(-1)$ with $\Hh \in \Ff$.

\begin{proposition}
$SU_C(2,K_C)$ is rationally chain-connected. In fact, any two general points in $SU_C(2, K_C)$ can be connected by at most 6 rational curves which can be described explicitly. 
\end{proposition}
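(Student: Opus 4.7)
The plan is to combine the dominance of $\Phi_4$ established in the theorem with the Dolgachev--Kapranov identification of an open locus of $\overline{M}(1,4)$ with logarithmic bundles attached to arrangements of 6 lines on $\PP_2$. Rational chain-connectedness is already a formal consequence of the unirationality of $SU_C(2,K_C)$; the content of the proposition is the explicit bound of 6 and the geometric description of the connecting curves, which should come from moving the 6 lines of the arrangement one at a time.

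First I would lift the two general points $[E_1], [E_2] \in SU_C(2, K_C)$ to bundles $E(\Hh_i) \otimes \Oo_{\PP_2}(-1) \in \overline{M}(1,4)$ corresponding to general arrangements $\Hh_i = (H_1^{(i)}, \ldots, H_6^{(i)})$ for $i=1,2$. This is legitimate because $\Phi_4$ is dominant and the logarithmic bundles form a dense open subset of $\overline{M}(1,4)$, so general points of $SU_C(2,K_C)$ have preimages of this form.

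Next I would interpolate between $\Hh_1$ and $\Hh_2$ one line at a time. For $j=1,\ldots,6$, let $\Hh^{(j)}$ denote the arrangement whose first $j$ lines come from $\Hh_2$ and whose remaining $6-j$ lines come from $\Hh_1$, so that $\Hh^{(0)}=\Hh_1$ and $\Hh^{(6)}=\Hh_2$. To pass from $\Hh^{(j-1)}$ to $\Hh^{(j)}$ only the $j$-th line must change; I would connect $H_j^{(1)}$ and $H_j^{(2)}$ by a pencil in $\PP_2^*$ (a $\PP^1$ of lines through a common point), thus obtaining a one-parameter family $\Ff_j$ of arrangements parameterized by $\PP^1$. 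Applying the logarithmic bundle construction yields a rational map $\PP^1 \dashrightarrow \overline{M}(1,4)$, and composing with $\Phi_4$ gives a rational map $\PP^1 \dashrightarrow SU_C(2,K_C)$ which extends to a morphism because the target is projective. Its image is a rational curve (possibly a point) passing through the images of $E(\Hh^{(j-1)}) \otimes \Oo_{\PP_2}(-1)$ and $E(\Hh^{(j)}) \otimes \Oo_{\PP_2}(-1)$. Concatenating the 6 rational curves so obtained produces a chain connecting $[E_1]$ and $[E_2]$; any $\Ff_j$ whose image degenerates to a point merely shortens the chain.

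The main technical point will be verifying that the generic member of each $\Ff_j$ remains a general arrangement, so that the logarithmic bundle construction and the restriction map $\Phi_4$ are simultaneously well-defined along the chosen pencil. This amounts to a dimension count: the locus of non-general arrangements (those with three concurrent lines, coincident lines, or other special incidences forbidden by Dolgachev--Kapranov) is a proper closed subvariety of $(\PP_2^*)^6$, and a generic pencil in $\PP_2^*$ through $H_j^{(1)}$ and $H_j^{(2)}$ avoids it. Since $[E_1]$ and $[E_2]$ are general, there is enough flexibility in the choice of lifts $\Hh_1, \Hh_2$ and of the intermediate pencils to ensure this at every stage.
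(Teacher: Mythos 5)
Your proposal is correct and follows essentially the same route as the paper: the paper's six families $\Ff_i$ are exactly your one-line-at-a-time pencils (fix five lines, move the sixth through a point), composed with the logarithmic-bundle construction and $\Phi_4$, and extended to morphisms on $\PP^1$ by projectivity of the target. Your explicit description of the intermediate arrangements $\Hh^{(j)}$ and the genericity check along each pencil is, if anything, slightly more careful than the paper's treatment.
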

\begin{proof}
Let us consider a special type of arrangements of 6 lines. Let $H_0,
H_1, \cdots, H_5$ be 6 lines in general position on $\PP_2$ and $p$
be a fixed point on $H_0$ in general position. If we fix $H_1,
\cdots, H_5$, then we have a 1-dimensional family $\Ff$ of 6 lines
with $H_0$ moving. Consider a map
$$\Psi : \PP_1(\Ff) \rightarrow SU_C(2, K_C),$$
sending $\Hh$ to $E(\Hh)(-1)|_C$. Since $SU_C(2, K_C)$ is
projective, this map is a morphism \cite{Hartshorne}. Clearly $\Psi$ is not a constant
map, otherwise $\Phi_4$ is also a constant map, which is not true.
From the fact that logarithmic bundles associated to 6 lines in
general position, form an open Zariski subset of $\overline{M}(3,6)$
and $\Phi_4$ is dominant, we can find a 1-dimensional family of 6
lines $\Ff$ which maps to a rational curve on $SU_C(2, K_C)$ via
$\Psi$ for a general element of $SU_C(2, K_C)$. Furthermore, for two
general elements $E_1, E_2\in \overline{M}(3,6)$, we can find 6
families of 6 lines $\Ff_i$, $1\leq i \leq 6$, as above, such that
the arrangements corresponding to $E_1, E_2$ lie in $\Ff_1$,
$\Ff_6$, respectively and $\Ff_i \cap \Ff_{i+1} \not = \emptyset$.
From this fact with the dominance of $\Phi_4$, we can find 6
rational curves passing through two general points on $SU_C(2,
K_C)$.
\end{proof}

\begin{remark}
 Note that we can choose these rational curves not contained
in the singular locus of $SU_C(2, K_C)$ which is the Kummer variety
of $\Pic^2 (C)$. Let $\widetilde{S}$ be a desingularization by the
blow-up \cite{Kiem} and consider the proper transform of the
previous 6 rational curves on $SU_C(2, K_C)$. It shows the
rationally chain-connected of $\widetilde{S}$ and since
$\widetilde{S}$ is smooth, it implies the rational connectedness,
i.e. the chain of these 6 curves can be deformed to a rational curve
and its image on $SU_C(2, K_C)$ will give us a rational curve
through two general points.
\end{remark}

\bibliographystyle{amsplain}
%\GATHER{MultiplierNotes.bib}
\bibliography{ResSur}
\end{document}